\renewcommand{\binom}[2]{\genfrac{(}{)}{0pt}{}{#1}{#2}}
\newcommand{\stirling}[2]{\genfrac{[}{]}{0pt}{}{#1}{#2}}
\def\bbbone{{\mathchoice {\rm 1\mskip-4mu l} {\rm 1\mskip-4mu l}
{\rm 1\mskip-4.5mu l} {\rm 1\mskip-5mu l}}}
\def\bbbe{\mathbb{E}}
\def\bbbp{\mathbb{P}}
\def\bbbz{\mathbb{Z}}
\def\bbbn{\mathbb{N}}
\def\ignore#1{}
\def\tC{\tilde{C}}
\newcommand{\mcL}{\mathcal{L}}
\newcommand{\p}{\mathbb{P}}
\newcommand{\R}{\mathcal{R}}
\newtheorem{lemma}{Lemma}
\newtheorem{remark}{Remark}
\newtheorem{theorem}{Theorem}
\newtheorem{proposition}{Proposition}
\title{Random derangements and the Ewens Sampling Formula}
\author[1]{Poly H. da Silva, Arash Jamshidpey, Simon Tavar\'e}
\affil[1]{Department of Statistics\\Columbia University\\1255 Amsterdam Avenue\\New York, NY 10027, USA}
\begin{document}
\maketitle


\begin{abstract}
We study derangements of $\{1,2,\ldots,n\}$ under the Ewens distribution with parameter $\theta$. We give the moments and marginal distributions of the cycle counts, the number of cycles, and asymptotic distributions for large $n$. We develop a $\{0,1\}$-valued non-homogeneous Markov chain with the property that the counts of lengths of spacings between the 1s have the derangement distribution. This chain,  an analog of the so-called Feller Coupling, provides a  simple way to simulate derangements in time independent of $\theta$ for a given $n$ and linear in the size of the derangement.
\end{abstract}

\noindent {\bf Keywords: } Feller Coupling, simulation, Poisson approximation, Poisson-Dirichlet distribution, probabilistic combinatorics

\noindent {\bf MSC: } 60C05,60J10,65C05, 97K20,97K60,65C40

\maketitle

\section{Introduction} 
The Ewens Sampling Formula~\cite{wje72} arose in population genetics as the joint probability distribution of the number of selectively neutral alleles $C_j(n)$ represented $j$ times in a sample of $n$ genes, for $j = 1,2,\ldots,n$. For positive integers $c_1,c_2,\ldots, c_n$ satisfying $\sum_{j=1}^n j c_j = n$, we have
\begin{equation}\label{esflaw}
    \bbbp_\theta(C_1(n) = c_1,\ldots,C_n(n) = c_n) = \frac{n!}{\theta_{(n)}}\,\prod_{j=1}^n \left(\frac{\theta}{j}\right)^{c_j} \frac{1}{c_j!},
\end{equation}
for $\theta \in (0,\infty)$, $\theta_{(n)} := \theta(\theta+1)\cdots(\theta+n - 1) = \Gamma(n+\theta)/\Gamma(\theta), n \geq 1$ and $\theta_{(0)} = 1$.\footnote{We define $\theta_{(-k)} = 0$, for $k \in \bbbn$.} In its original formulation, $\theta$ is a parameter related to the  rate at which novel alleles appear. In what follows we denote the law in (\ref{esflaw}) by ESF($\theta$); to simplify the notation, we suppress the $\theta$ in $\bbbp_{\theta}$ in what follows, where there is no cause for confusion.

The ESF has been studied extensively, and it arises in many different settings in probability and statistics. \cite[Chapter 41]{wjest97} provides an overview, \cite{arratia_logarithmic_2003} describes numerous applications in combinatorics, and~\cite{hc16} provides many other examples. Of particular interest here is its appearance as the distribution of the cycle counts of a $\theta$-biased permutation. Let $\pi$ be a permutation of $\{1,2,\ldots,n\}$ decomposed as a product of cycles. If $\pi$ is chosen uniformly with probability $1/n!$, then Cauchy's formula establishes that the cycle counts $(C_1(n),\ldots,C_n(n))$
have the ESF(1) law~\cite{vlg44}, and if a permutation $\pi$ having $k$ cycles is chosen with probability proportional to $\theta^k$, then the cycle counts have the ESF($\theta$) law. In this case,
\begin{equation}\label{pilaw}
\bbbp (\pi) = \frac{\theta^k}{\theta_{(n)}},   
\end{equation}
if the permutation $\pi$ has $k$ cycles.
See~\cite[Chapters 1 and 2.5]{arratia_logarithmic_2003} for more detailed discussion and history.

\subsection{Derangements}
Students of probability often meet derangements in the context of (versions of) the so-called hat-check problem~\cite[Chapter IV]{wf68}: 
$n$ diners leave their hats at a restaurant before their meal and  hats are returned at random after the meal. What is the probability that no diner gets back their own hat? Label the diners 1,2,\ldots, $n$ and construct a permutation $\pi$ by setting $\pi_j$ to be the label of the diner whose hat was returned to $j$. The question asks us to evaluate the probability that $\pi$ has no singleton cycles, and inclusion-exclusion is typically used to show that the required probability is
\begin{equation}\label{hats}
\bbbp_1(C_1(n) = 0) = \frac{D_n}{n!} = \sum_{l=0}^n (-1)^l \frac{1}{l!},
\end{equation}
where $D_n$ is the $n$th derangement number, the number of $n$-permutations with no fixed points.
The cycles of a derangement describe groups of diners who share hats among themselves, with no diner getting his own. The cycle counts  $(\tilde{C}_2(n),\ldots,\tilde{C}_n(n))$  of such a derangement have a distribution determined by
\begin{equation}\label{derlaw}
\mcL(\tilde{C}_2(n),\ldots,\tilde{C}_n(n)) = \mcL(C_2(n),\ldots,C_n(n) \vert C_1(n) = 0)
\end{equation}
The distribution on the right of (\ref{derlaw}) is determined by ESF(1) for a random permutation, and by ESF($\theta)$ for the biased case, when
(\ref{hats}) is replaced by
\begin{equation}\label{thetahats}
    \lambda_n(\theta) := \bbbp(C_1(n) = 0) = \frac{n!}{\Gamma(n+\theta)} \sum_{j=0}^n (-1)^j \frac{\theta^j}{j!}\, \frac{\Gamma(n+\theta-j)}{(n-j)!},
\end{equation}
with $\lambda_0(\theta) = 1, \lambda_1(\theta) = 0.$

\section{Properties of derangements} 
In this section we collect some results for derangements obtainable directly from~(\ref{derlaw}).

\subsection{Factorial moments of the cycle counts}
The falling factorial moments are straightforward to compute. For $r_2,r_3,\ldots,r_b \geq 0$ with $2r_2 + \cdots + b r_b = m \leq n$,
\begin{eqnarray*}
\lefteqn{\lambda_n(\theta)\, \mathbb{E}(\tC_2^{[r_2]} \cdots \tC_b^{[r_b]}) = \sum\nolimits{'}\,
c_2^{[r_2]} \cdots c_b^{[r_b]} \frac{n!}{\theta_{(n)}} \prod_{j = 2}^{n} \left(\frac{\theta}{j}\right)^{c_j}\,\frac{1}{c_j!}}\\
& = &  \frac{n!}{\theta_{(n)}}\, \prod_{j=2}^b \left(\frac{\theta}{j}\right)^{r_j}\,\frac{\theta_{(n-m)}}{(n-m)!} \
\sum\nolimits{''}\,
\frac{(n-m)!}{\theta_{(n-m)}} \prod_{j=2}^b \left(\frac{\theta}{j}\right)^{c_j'} \frac{1}{c_j'!} 
\prod_{j=b+1}^n \left(\frac{\theta}{j}\right)^{c_j} \frac{1}{c_j!} \\
& = & \frac{n!}{\theta_{(n)}}\, \prod_{j=2}^b \left(\frac{\theta}{j}\right)^{r_j}\,\frac{\theta_{(n-m)}}{(n-m)!}\ \lambda_{n-m}(\theta),
\end{eqnarray*}
since the last sum is just the probability that a random permutation of $(n-m)$ objects is a derangement; the sum in $\sum\limits{'}$ is over $2c_2 + \cdots + n c_n = n, c_2 \geq  r_2, \ldots,c_b \geq r_b, c_{b+1},\ldots,c_n \geq 0$, and the sum $\sum\limits{''}$ is over $c_2',\ldots,c_b',c_{b+1},\ldots,c_n \geq 0$ satisfying $ 2c_2' + \cdots + b c_b' +  (b+1)c_{b+1} + \cdots + n c_n = n-m$.
Hence
\begin{equation}\label{moments}
\mathbb{E}(\tC_2^{[r_2]} \cdots \tC_b^{[r_b]}) =  \bbbone(m \leq n)\,\frac{n!}{\lambda_n(\theta) \theta_{(n)}}\,\frac{\lambda_{(n-m)}(\theta) \theta_{(n-m)}}{(n-m)!} \, \prod_{j=2}^b \left(\frac{\theta}{j}\right)^{r_j}.
\end{equation}
In particular, for $j = 2,\ldots,n$,
\begin{equation}\label{ecjn*}
\bbbe \tC_j(n) = \frac{n!}{\lambda_n(\theta) \theta_{(n)}}\,\frac{\lambda_{(n-j)}(\theta) \theta_{(n-j)}}{(n-j)!}\, \frac{\theta}{j}.
\end{equation}
Note that $\bbbp(\tC_{n-1}(n) =  0) = 1$, and indeed $\bbbe \tC_{n-1}(n) = 0$.

\subsection{Distribution of the cycle counts}
To compute the distribution of the cycle counts, suppose that $X$ is a discrete random variable taking values in $\{0,1,2,\ldots,n\}$, with distribution $p_l = \bbbp(X = l), 0 \leq l \leq n$. Define
$$
u_j = \bbbe X^{[j]} = \sum_{l=j}^n l_{[j]}\, p_l, \quad j=1, 2, \ldots, n,
$$
where $l_{[j]} = l(l-1)\cdots (l-j+1)$ and $u_0 = 1$. Inverting this relationship gives
$$
p_r = \frac{1}{r!} \sum_{l=0}^{n-r} (-1)^l \,\frac{1}{l!}\, u_{r+l} = \frac{1}{r!} \sum_{i=r}^{n} (-1)^{i-r} \,\frac{1}{(i-r)!}\, u_{i}.
$$

Using the result in (\ref{moments}), choose $j \in \{2,\ldots,n\}, i \leq \lfloor n/j\rfloor$ and set
$$
u_i = \bbbe \tC_j(n)^{[i]} = \frac{n!}{\lambda_n(\theta) \theta_{(n)}}\,\frac{\lambda_{(n-ji)}(\theta) \theta_{(n-ji)}}{(n-ji)!} \, \left(\frac{\theta}{j}\right)^{i}.
$$
Then for $0 \leq r \leq \lfloor n/j \rfloor$,
\begin{eqnarray}
\bbbp(\tC_j(n) = r) & = & \left(\frac{\theta}{j}\right)^{r}\frac{1}{r!} \frac{n!}{\lambda_n(\theta) \theta_{(n)}} \nonumber \\
& & \qquad \times \sum_{i = r}^{\lfloor n/j\rfloor} n (-1)^{i-r} \frac{1}{(i-r)!} \frac{\lambda_{(n-ji)}(\theta) \theta_{(n-ji)}}{(n-ji)!} \, \left(\frac{\theta}{j}\right)^{i-r}.\label{cjnlaw}
\end{eqnarray}
The special case $j = n, r = 1$ is used in Section~\ref{singlecycle}.

\begin{remark}
Many of these results are well known in the case of random derangements, for which $\theta = 1$. For example, 
$$
\bbbp_1(\tC_2(n) = 0) = \frac{n!}{D_n}\,\sum_{i = 0}^{\lfloor n/2\rfloor} (-1)^i \frac{1}{i!} \left(\frac{1}{2}\right)^i \frac{D_{(n-2i)}}{(n-2i)!}.
$$
The integers  
$$
a(n) = D_n \, \bbbp_1(\tC_2(n) = 0), \quad n = 1, 2,3,\ldots 
$$
give the number of derangements of $n$ objects that have all cycles of length at least 3; computing the first few values gives
$$
a(2) = 0, a(3) = 2, a(4) = 6, a(5) = 24, a(6) = 160, a(7) = 1140, a(8) = 8988, \ldots.
$$
It is readily checked that this is (the start of)  sequence A038205 in the Online Encyclopedia of Integer Sequences~\cite{oeis2020}, where other formulae are provided.
\end{remark}

\subsection{The number of cycles}
The distribution of the number of cycles, $\tilde{K}_n$, may be found from the fact that the number $D(n,k)$ of derangements of size $n$ having $k$ cycles is 
$$
D(n,k) = \sum_{l=0}^k (-1)^l \binom{n}{l} \stirling{n-l}{k-l},
$$
where $\stirling{n}{k}$ is the unsigned Stirling number of the first kind. It follows that
\begin{eqnarray}
 \bbbp(\tilde{K}_n = k) & = & \frac{1}{\lambda_n(\theta)}\,\sum_{
 \genfrac{}{}{0pt}{}{\pi: |\pi| = k}{\pi \textrm{ a derangement}}
 } \bbbp(\pi) \nonumber \\
 &  = & \frac{1}{\lambda_n(\theta)}\,\sum_{
 \genfrac{}{}{0pt}{}{\pi: |\pi| = k}{\pi \textrm{ a derangement}}
 }
 \frac{\theta^k}{\theta_{(n)}} \qquad ({\rm from\ (\ref{pilaw})}) \nonumber \\
 & = & \frac{\theta^k D(n,k)}{\lambda_n(\theta) \theta_{(n)}}, \ k=1,2,\ldots, \lfloor n/2 \rfloor. \label{knlaw}
\end{eqnarray}
Note that
\begin{eqnarray*}
 \lambda_n(\theta) = \frac{1}{\theta_{(n)}} \sum_{k = 1}^n \theta^k D(n,k) & = & \frac{1}{\theta_{(n)}} \sum_{l=0}^n (-1)^l \binom{n}{l} \theta^l \, \sum_{k=l}^n \theta^{k-l} \stirling{n-l}{k-l} \\
 & = & \frac{1}{\theta_{(n)}} \sum_{l=0}^n (-1)^l \binom{n}{l} \theta^l \theta_{(n-l)}\\
 & = & \frac{n!}{\theta_{(n)}} \sum_{l=0}^n (-1)^l \frac{\theta^l}{l!} \frac{\theta_{(n-l)}}{(n-l)!},
\end{eqnarray*}
providing a direct validation of (\ref{thetahats}).
$\tilde{K}_n$ has mean
\begin{equation}\label{knmean}
\bbbe \tilde{K}_n = \bbbe (\tC_2(n) + \cdots + \tC_n(n)) =  \frac{1}{\lambda_n(\theta)} \frac{n!}{\theta_{(n)}}\,\sum_{j=2}^n \lambda_{(n-j)}(\theta) \frac{\theta_{(n-j)}}{(n-j)!}\, \frac{\theta}{j}
\end{equation}

\subsection{Properties derived from the Conditioning Relation}
 ESF($\theta$) may be represented as the law of independent Poisson random variables $Z_1,Z_2,\ldots,Z_n$ with 
 \begin{equation}\label{zjmean}
 \bbbe Z_j = x^j \theta/j \mbox{ for any } x > 0,
 \end{equation} 
 conditioned on $T_n := Z_1 + 2 Z_2 + \cdots + nZ_n = n$. This is known as the Conditioning Relation, and is exploited in the context of combinatorial structures in~\cite{arratia_logarithmic_2003}. The same relationship holds for derangements too: defining $T_{1n} = 2 Z_2 + \cdots + nZ_n$, we have
\begin{equation}\label{condlaw}
\mathcal{L}(\tC_2(n),\ldots,\tC_n(n)) = \mathcal{L}( Z_2,\ldots,Z_n \mid T_{1n} = n).
\end{equation}
To see this, note that for $c_2 \geq 0, \ldots, c_n \geq 0$ satisfying  $2c_2 + \cdots + n c_n = n,$
\begin{eqnarray}
\lefteqn{\bbbp(C_1(n) = 0,C_2(n) = c_2,\ldots,C_n(n) = c_n)  =  } \nonumber\\
& = & \bbbp(Z_1 = 0,Z_2=c_2,\ldots,Z_n = c_n \mid T_n = n) \nonumber\\
& = & \bbbp(Z_1 = 0,Z_2=c_2,\ldots,Z_n = c_n, Z_1 + T_{1n} = n)\,/\,\bbbp(T_n = n) \nonumber\\
& = & \bbbp(Z_1 = 0) \bbbp(Z_2=c_2,\ldots,Z_n = c_n \mid T_{1n} = n) \bbbp(T_{1n} = n) \,/\, \bbbp(T_n = n) \label{top}
\end{eqnarray}
while 
\begin{eqnarray}
\bbbp(C_1(n) = 0) & = & \bbbp(Z_1 = 0 \mid T_n = n) \nonumber \\
& = & \bbbp(Z_1 = 0,  T_n = n) \,/\, \bbbp(T_n = n) \nonumber \\
& = & \bbbp(Z_1 = 0) \bbbp(T_{1n} = n) \,/\,\bbbp(T_n = n) \label{bottom}
\end{eqnarray}
Dividing (\ref{top}) by (\ref{bottom}) and using (\ref{derlaw}) establishes (\ref{condlaw}).

The relationship in (\ref{condlaw}) means that asymptotic results can be read off from the general theory in~\cite{arratia_logarithmic_2003}. For example, the $\tC_j(n)$ are asymptotically independent Poisson random variables with mean $\theta/j$, which follows from (\ref{moments}) as well. We note for later use the consequence that
\begin{equation}\label{prob0lim}
\lambda_n(\theta) = \bbbp(C_1(n) = 0) \to \bbbp_{\theta}(Z_1 = 0) = e^{-\theta}, \quad n \to \infty.
\end{equation}

The largest cycles, when scaled by $n$ have asymptotically the Poisson-Dirichlet law with parameter $\theta$. Total variation estimates for the Poisson result also follow from \cite{arratia_logarithmic_2003}, and methods akin to those in~\cite{at92b} may be used to derive central limit results, for example. We will not pursue this further here.

\section{The Feller Coupling}

The Feller Coupling was introduced in \cite{abt92} as a way to generate the cycles in a growing permutation one at a time, and it has proved very useful in the study of the asymptotics of properties of the ESF; \cite{at92b} illustrates some of these. 
To describe the Feller Coupling, define independent Bernoulli random variables $\xi_i$ satisfying
\begin{equation}\label{xidist}
\bbbp(\xi_i = 1) = \frac{\theta}{\theta+i-1}, \quad \bbbp(\xi_i = 0) = \frac{i-1}{\theta+i-1}, \quad i \geq 1.
\end{equation}
The cycle counts are determined by the spacings between the 1s in realizations of $\xi_i, i \geq 1$. If we define
\begin{equation}\label{feller}
    C_j(n) = \#j - \mbox{spacings in }  1\xi_2 \xi_3 \cdots \xi_n 1,
\end{equation}
then
\begin{itemize}
    \item[(i)] The law of $(C_1(n),\ldots,C_n(n))$ is ESF($\theta$); and
    \item[(ii)] $Z_j = C_j(\infty) = \#j$ - \mbox{spacings in } $1 \xi_2 \xi_3 \cdots $ are independent Poisson-distributed random variables with $\bbbe Z_j =j/\theta$.
\end{itemize}
Further details may be found in~\cite{abt92} and~\cite[Chapter 5]{arratia_logarithmic_2003}.

In the spirit of the Feller Coupling, we will construct a sequence of random variables $\eta_1 = 1, \eta_2, \eta_3, \ldots$ with the property that for any $n$ the law of the counts of spacings between the 1s in $1\eta_2 \eta_3 \cdots \eta_n 1$ is precisely that of (\ref{derlaw}). As might be anticipated,  $\eta_n, \eta_{n-1}, \ldots, \eta_2, \eta_1 =1$ is no longer a sequence of independent random variables, but rather a Markov chain. We will identify the structure of this chain, and provide some applications of its use.

\subsection{A useful Markov chain}

To identify the Markov chain, for $j\in \bbbn$ let 
\[
\Delta_j^*=\{(a_j,a_{j-1},\ldots,a_1)\in \{0,1\}^j: a_j=0 \ \ and \ \nexists \  i < j  \  s.t.\ a_i=a_{i-1}=1\},
\]
and 
$$
\Delta_j=\{(a_j,a_{j-1},\ldots,a_1)\in \Delta_j^*: a_1=1\}
$$
Note that $\Delta_1=\emptyset$. Let $\xi_i, i \geq 1$ be the Bernoulli random variables defined in (\ref{xidist}), and define
\begin{eqnarray*}
\lambda_j(\theta)& := & \p((\xi_j,\xi_{j-1},\ldots,\xi_1)\in \Delta_j)\\
& = & \sum\limits_{(r_j,r_{j-1},\ldots,r_1)\in\Delta_j}\p(\xi_j=r_j,\xi_{j-1}=r_{j-1},\ldots,\xi_1=r_1),
\end{eqnarray*}
which for $j>1$ is the probability that a random $j$-permutation with parameter $\theta$ constructed according to the Feller Coupling is a derangement; we have seen that $\lambda_j(\theta)$ is given by (\ref{thetahats}). 

Define  $\R_1=\{(1)\}$ and for $j \geq 2$,
$$
\R_j=\{0,1\}^{j-1}\times \{1\}=\{(a_1,\ldots,a_j)\in \{0,1\}^j: a_j=1\}.
$$
For $1\leq i\leq n$ and $r=(r_n,\ldots,r_1)\in \R_n$, let $N_i(r)$ be the number of $i$-spacings in the sequence $(1\,r)$ (i.e., the number of sub-patterns $1 \, 0^{i-1} \, 1$ in $(1\,r$)), and define  
\[
\rho(a_1,\ldots,a_n)=\{r=(r_n,r_{n-1},\ldots,r_1)\in\R_n: N_1(r)=a_1,\ldots,N_n(r)=a_n\}.
\]

We seek to construct a random sequence of $0$s and $1$s, $\eta = (\eta_n,\ldots,\eta_2,\eta_1=1)$ such that if
$$
\tilde{C}_i(n) := N_i(\eta), i=2,\ldots,n,
$$
then
\begin{equation}\label{wanted}
\p(\tilde{C}_2(n)=c_2,\ldots,\tilde{C}_n(n)=c_n)=\p(C_1(n)=0,\ldots,C_n(n)=c_n |  C_1(n)=0).
\end{equation} 
Simplifying the r.h.s. of (\ref{wanted}), we have
\begin{eqnarray*}
\p(\tilde{C}_j(n)=c_j, 2 \leq j \leq n) & = &\p(C_1(n)=0, C_j(n)=c_j,2 \leq j \leq n)/\p(C_1(n)=0) \nonumber \\
& = & \lambda_n^{-1}(\theta)
\sum_{
{\genfrac{}{}{0pt}{}{(r_n,r_{n-1},\ldots,r_1)}{\in  \rho(0,c_2,\ldots,c_n)}}
} \p(\xi_n=r_n,\ldots,\xi_1=r_1).
\end{eqnarray*}
Note that if $(r_n,r_{n-1},\ldots,r_1)\in \rho(0,c_2,\ldots,c_n)$, then $r_n=r_2=0$. This suggests defining $\eta_n,\eta_{n-1},\ldots,\eta_2,\eta_1=1$ with law 
\begin{eqnarray*}
\p(\eta_n=r_n,\ldots,\eta_1=r_1) &=& \p(\xi_n=r_n,\ldots,\xi_1=r_1 \mid (\xi_n,\ldots, \xi_1)\in \Delta_n)\\
&=&\left\lbrace 
\begin{array}{l}
\lambda_n^{-1}(\theta) \, \p(\xi_n=r_n,\ldots,\xi_1=r_1),  \mbox{ if }  (r_n,\ldots,r_1)\in \Delta_n \\
0,  \mbox{ otherwise.}
\end{array}
\right. 
\end{eqnarray*}

By construction, $(\tilde{C}_2(n),\ldots,\tilde{C}_n(n))$ has the law of $(C_1(n),\ldots, C_n(n))$ conditioned on $C_1(n)=0$. Since $\xi_j$ are independent random variables, given $\eta_i$, the vectors $({\eta}_n,{\eta}_{n-1},\ldots,{\eta}_{i+1})$ and $({\eta}_{i-1},\ldots,{\eta}_{1})$ are independent and hence  ${\eta}_{n},{\eta}_{n-1},\ldots,{\eta}_{1}$ is a Markov chain, starting from $\eta_{n+1} = 1$.

More explicitly, for $3\leq i\leq n-1$, $(r_n,r_{n-1},\ldots,r_{i+2}, x)\in \Delta_{n-i}^*$ and $y\in\{0,1\}$, let 
\begin{eqnarray*}
 \tau_{i+1}(x,y) & := & \p({\eta}_i=y \mid {\eta}_{n+1}=1,{\eta}_n=r_n,\ldots,{\eta}_{i+2}=r_{i+2}, {\eta}_{i+1}=x)\\
&=& \frac{\p({\eta}_{n+1}=1,{\eta}_n=r_n,\ldots,{\eta}_{i+2}=r_{i+2}, {\eta}_{i+1}=x, {\eta}_{i}=y)}{\p({\eta}_{n+1}=1,{\eta}_n=r_n,\ldots,{\eta}_{i+2}=r_{i+2}, {\eta}_{i+1}=x)}
\end{eqnarray*}

We compute this for $x,y\in\{0,1\}$. Starting with the case $x=y=0$, and for $3\leq i\leq n-1$, we write
$\tau_{i+1}(0,0) = A / B,$
where
\begin{eqnarray*}
A & = & 
\lambda_n^{-1}(\theta) \p(\xi_{n+1}=1,\xi_{n}=r_{n},\ldots,\xi_{i+2}=r_{i+2},\xi_{i+1}=0) 
 \p((\xi_{i},\ldots,\xi_{1})\in\Delta_i) \\
& = & \lambda_n^{-1}(\theta) \, \p(\xi_{n+1}=1,\xi_{n}=r_{n},\ldots,\xi_{i+2}=r_{i+2},\xi_{i+1}=0)\lambda_i(\theta)
\end{eqnarray*}
and 
\begin{eqnarray*}
B & = & \lambda_n^{-1}(\theta) \p(\xi_{n+1}=1,\xi_{n}=r_{n},\ldots,\xi_{i+2}=r_{i+2},\xi_{i+1}=0) \\
& & \quad \times \{\p((\xi_{i},\ldots,\xi_{1})\in\Delta_i)+\p(\xi_i=1)\p((\xi_{i-1},\ldots,\xi_{1})\in\Delta_{i-1})\} \\
& = & \lambda_n^{-1}(\theta)\,\p(\xi_{n+1}=1,\xi_{n}=r_{n},\ldots,\xi_{i+2}=r_{i+2},\xi_{i+1}=0) \\
&& \quad \times\left\{\lambda_i(\theta)+\frac{\theta}{\theta+i-1}\lambda_{i-1}(\theta)\right\}
\end{eqnarray*}
so that
$$
\tau_{i+1}(0,0) =  \frac{\lambda_i(\theta)}{\left\{\lambda_i(\theta)+\frac{\theta}{\theta+i-1}\lambda_{i-1}(\theta)\right\}}.
$$
On the other hand,
$$
\p({\eta}_i=0 \mid{\eta}_{i+1}=0)  =  \p({\eta}_{i+1}=0,{\eta}_{i}=0)/\p({\eta}_{i+1}=0) 
= C/D,
$$
where
\begin{eqnarray*}
C & = & \lambda_n^{-1}(\theta)\,\p((\xi_{n},\ldots,\xi_{i+2})\in\Delta_{n-i-1}^*,\xi_{i+1}=0) 
 \p((\xi_{i},\ldots,\xi_{1})\in\Delta_{i}) \\
& = & \lambda_n^{-1}(\theta)\, \p((\xi_{n},\ldots,\xi_{i+2})\in\Delta_{n-i-1}^*,\xi_{i+1}=0)\lambda_i(\theta)
\end{eqnarray*}
and
\begin{eqnarray*}
D & = & \lambda_n^{-1}(\theta)\p((\xi_{n},\ldots,\xi_{i+2})\in\Delta_{n-i-1}^*,\xi_{i+1}=0)\\ 
& & \quad\times \{\p((\xi_{i},\ldots,\xi_{1})\in\Delta_{i})+\p(\xi_i=1)\p((\xi_{i-1},\ldots,\xi_{1})\in\Delta_{i-1})\} \\
& = & \lambda_n^{-1}(\theta) \p((\xi_{n},\ldots,\xi_{i+2})\in\Delta_{n-i-1}^*,\xi_{i+1}=0)\, \left\{\lambda_i(\theta)+\frac{\theta}{\theta+i-1}\lambda_{i-1}(\theta)\right\}
\end{eqnarray*}
Hence
$$
\p({\eta}_i=0 \mid{\eta}_{i+1}=0) = \frac{\lambda_i(\theta)}
{\left\{\lambda_i(\theta)+\frac{\theta}{\theta+i-1}\lambda_{i-1}(\theta)\right\}}.
$$

Similarly we can deduce that
\[
\tau_i(0,1)=\frac{\frac{\theta\lambda_{i-1}(\theta)}{\theta+i-1}}{\lambda_i(\theta)+\frac{\theta\lambda_{i-1}(\theta)}{\theta+i-1}}=\p({\eta}_i=1 \mid {\eta}_{i+1}=0),
\]
while
\[
\tau_i(1,1) = 0 = \p({\eta}_i=1 \mid {\eta}_{i+1}=1)
\]
and
\[
\tau_i(1,0) = 1 = \p({\eta}_i=0 | {\eta}_{i+1}=1).
\]

We summarise the discussion as follows. 
\begin{theorem}
(i) For each $n \geq 3$, the sequence of random variables ${\eta}_{n+1}=1,{\eta}_{n},\ldots,{\eta}_{2},{\eta}_{1}=1$ is a non-homogeneous Markov chain with transition matrices
$$
P_{r} := \left(
\begin{array}{cc}
\bbbp(\eta_r = 0 \mid \eta_{r+1} = 0) & \bbbp(\eta_r = 1 \mid \eta_{r+1} = 0) \\
\bbbp(\eta_r = 0 \mid \eta_{r+1} = 1) & \bbbp(\eta_r = 1 \mid \eta_{r+1} = 1)
\end{array}
\right)
$$
given by
\begin{equation}\label{prdef}
P_{r} = \left(
\begin{array}{cc}
\displaystyle\frac{(\theta+r-1)\lambda_r(\theta)}{(\theta+r-1)\lambda_r(\theta) + \theta \lambda_{r-1}(\theta)} &  \displaystyle\frac{\theta\lambda_{r-1}(\theta)}{(\theta+r-1)\lambda_r(\theta) + \theta \lambda_{r-1}(\theta)} \\
&\\
1 & 0
\end{array}
\right),
\end{equation}
for $r = n-1,\ldots,3$,
$$
P_{2} = \left(
\begin{array}{cc}
1 & 0\\
1 & 0
\end{array}
\right),
\quad
P_{1} = \left(
\begin{array}{cc}
0 & 1\\
0 & 1
\end{array}
\right).
$$

(ii) The counts $\tilde{C}_j(n), j=2,\ldots,n$ of the j-spacings between consecutive 1s in the sequence $1 \eta_n \cdots \eta_2 1$ have joint distribution given by (\ref{derlaw}).
\end{theorem}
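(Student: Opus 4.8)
The plan is to treat the explicit conditional law
$\p(\eta_n=r_n,\ldots,\eta_1=r_1)=\lambda_n^{-1}(\theta)\,\p(\xi_n=r_n,\ldots,\xi_1=r_1)\,\mathbf{1}\{(r_n,\ldots,r_1)\in\Delta_n\}$
as the definition of $\eta$ and to read both parts off from it. Part (ii) is essentially a relabelling. The key observation is that, because $\xi_1=1$ almost surely, the event $\Delta_n$ coincides with $\{C_1(n)=0\}$ for the Feller string $1\xi_2\cdots\xi_n1$: the conditions $\xi_n=0$, $\xi_1=1$ and the absence of two adjacent $1$s are exactly the conditions that this string contain no $1$-spacing. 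Conditioning the Feller coupling on $\{C_1(n)=0\}$ and using its defining property that $(C_1(n),\ldots,C_n(n))$ has the ESF$(\theta)$ law then gives $\mathcal{L}(N_2(\eta),\ldots,N_n(\eta))=\mathcal{L}(C_2(n),\ldots,C_n(n)\mid C_1(n)=0)$, which is (\ref{derlaw}); here I would note that the string read for $\eta$ is the reversal of the Feller string and that spacing counts are invariant under reversal, so the $N_j$ agree. Thus the real content is part (i).

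For the Markov property I would argue through conditional independence of past and future. Apart from the endpoint conditions $\eta_n=0$ and $\eta_1=1$, membership in $\Delta_n$ is a nearest-neighbour constraint: it only forbids adjacent pairs $(\eta_{j+1},\eta_j)=(1,1)$. Since the $\xi_j$ are independent, the conditioned weight factorizes as a product of single-site factors times a product of indicators $\mathbf{1}\{(\eta_{j+1},\eta_j)\neq(1,1)\}$. Fixing $\eta_{i+1}$ then cuts the unique factor linking the block $(\eta_n,\ldots,\eta_{i+2})$ to the block $(\eta_i,\ldots,\eta_1)$, so the two blocks are conditionally independent given $\eta_{i+1}$; equivalently $\p(\eta_i\mid\eta_{i+1},\ldots,\eta_{n+1})=\p(\eta_i\mid\eta_{i+1})$, the Markov property for the chain run in decreasing index.

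To produce the matrices I would compute the one-step probabilities directly. The row out of state $1$ is forced by the no-adjacent-$1$s rule: $\p(\eta_r=0\mid\eta_{r+1}=1)=1$. For the row out of state $0$, conditioning on $\eta_{r+1}=0$ imposes no constraint across position $r$, so the normalizing mass splits over the two continuations: $\eta_r=0$ carries weight $\p((\xi_r,\ldots,\xi_1)\in\Delta_r)=\lambda_r(\theta)$, while $\eta_r=1$ carries $\p(\xi_r=1)\,\p((\xi_{r-1},\ldots,\xi_1)\in\Delta_{r-1})=\frac{\theta}{\theta+r-1}\lambda_{r-1}(\theta)$, using $\p(\xi_r=1)=\theta/(\theta+r-1)$ from (\ref{xidist}). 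Dividing and clearing the factor $\theta+r-1$ gives the two nonzero entries of the first row of $P_r$. The boundary matrices follow from the same bookkeeping together with the endpoints: $\eta_2=0$ is forced by $\eta_1=1$, which yields the stated $P_2$, and $\eta_1=1$ is deterministic, which yields the stated $P_1$; since $\lambda_1(\theta)=0$, the general formula collapses to $P_2$ at $r=2$, so the list is internally consistent.

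I expect the main obstacle to be the Markov step. One must check that conditioning on the single coordinate $\eta_{i+1}$ genuinely decouples the two sides; this rests entirely on the locality of the $\Delta_n$ constraint and the independence of the $\xi_j$, and one should verify that the endpoint conditions $\eta_n=0$ and $\eta_1=1$ sit inside a single block and so do not reintroduce coupling. Once the factorization is in place, the remaining ingredients---the recursion for $\lambda_r(\theta)$ used to normalize the $0$-row and the identification $\Delta_n=\{C_1(n)=0\}$---are routine.
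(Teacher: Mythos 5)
Your proposal is correct and follows essentially the same route as the paper: define $\eta$ as the Feller sequence $\xi$ conditioned on $\Delta_n$ (which is exactly $\{C_1(n)=0\}$, giving part (ii)), obtain the Markov property from the independence of the $\xi_j$ together with the nearest-neighbour locality of the $\Delta_n$ constraint, and compute the $0$-row of $P_r$ by splitting the normalizing mass into $\lambda_r(\theta)$ versus $\tfrac{\theta}{\theta+r-1}\lambda_{r-1}(\theta)$, which is precisely the paper's $C/D$ calculation. Your explicit factorization argument for conditional independence is a slightly more spelled-out version of the paper's one-line appeal to independence of the $\xi_j$, but the substance is identical.
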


\subsection{The ordered cycles}\label{sectordered}
The $\eta$ process generates the length of cycles in an $n$-derangement in order, starting from the artificial boundary at $\eta_{n+1} = 1$. Denoting the length of the first cycle by $A_1(n)$, we have
\begin{eqnarray*}
\bbbp(A_1(n) > l) & = & \bbbp(\eta_{n+1} = 1, \eta_n = 0, \ldots, \eta_{n-l+1} = 0) \nonumber \\
& = & \prod_{r = n-l+1}^{n-1} \frac{(\theta+r -1) \lambda_r(\theta)}{(\theta+r -1) \lambda_r(\theta) + \theta \lambda_{r-1}(\theta)},
\end{eqnarray*}
which is readily computed. Some numerical illustrations appear in Table~\ref{Tab8}.

When $n$ is large, we have for $x \in (0,1)$
\begin{eqnarray*}
\log \bbbp(A_1(n) > \lfloor nx \rfloor) & = & - \sum_{r = n- \lfloor nx \rfloor +1}^{n-1} \log\left( 1 + \frac{\theta}{\theta+r-1}\,
\frac{\lambda_{r-1}(\theta)}{\lambda_r(\theta)}\right)\nonumber \\
& \sim & - \theta \sum_{r = n(1-x)}^{n-1} \frac{1}{\theta+r - 1} \frac{\lambda_{r-1}(\theta)}{\lambda_r(\theta)} \nonumber \\
& \sim & -\theta \int_{1-x}^1 u^{-1} du =  \theta \log(1-x), \label{a1nlim}
\end{eqnarray*}
using (\ref{prob0lim}). It follows that $n^{-1} A_1(n)$ has asymptotically a Beta distribution with density $\theta (1-x)^{\theta - 1}, 0 < x < 1.$ The joint law of the ordered spacings may be used in a similar way to show directly that $n^{-1}(A_1(n),A_2(n),\ldots)$ has asymptotically the GEM distribution with parameter $\theta$; see~\cite[Chapter 5.4]{arratia_logarithmic_2003}.

\section{Simulating derangements}
While we have a good understanding of the asymptotics of the distribution of cycle counts, for small values of $n$  simulation may be a useful approach to answer more detailed questions where explicit results are hard to find. Simulating derangements for the uniform case ($\theta = 1$) is a classical problem, and there have been many suggested methods, including~\cite{akl_new_1980}, \cite{martinez_generating_2008-1} and  \cite{Merlini2007} which use a modification of the Fisher-Yates algorithm for random permutations and a rejection step,  and improved by \cite{Mikawa2017}. \cite{mendonca_efficient_2020} exploits two different techniques, one based on random restricted transpositions and one on sequential importance sampling. We are not aware of explicit methods for the case of arbitrary $\theta$,
but the Markov chain approach provides an efficient way to do this.

\subsection{Rejection methods}

There are at least two such methods. For example, we can use the Feller Coupling to simulate $(C_1(n),C_2(n),\ldots,C_n(n))$ from ESF($\theta$) and  take $(\tilde{C}_2(n),\ldots,\tilde{C}_n(n)) = (C_2(n),\ldots,C_n(n))$ as an observation from (\ref{derlaw}) if $C_1(n) = 0$. The acceptance probability is $\lambda_n(\theta)$, which is $\approx e^{- \theta}$, so this strategy is slow if $\theta$ is large. Indicative results are shown in Table~\ref{Tab1}.

\begin{table}[htbp]
\caption{Rejection method. Derangements of size $n$, estimates based on 10,000 accepted runs.}
\centering
	\begin{tabular}{cccccccccc}
	\toprule
	\multicolumn{1}{c}{} & \multicolumn{3}{c}{$\theta = 0.5$} & \multicolumn{3}{c}{$\theta = 1.0$} &  \multicolumn{3}{c}{$\theta = 5.0$} \\
	 & time & accept & theory &  time & accept & theory &  time & accept & theory \\
	$n$ & (secs) & rate & (\ref{thetahats}) & (secs) & rate & (\ref{thetahats}) & (secs) & rate & (\ref{thetahats}) \\
	\cmidrule(l){2-4} \cmidrule(l){5-7} \cmidrule(l){8-10}
	10 & 0.38 & 0.590 & 0.591 & 0.62 & 0.372 & 0.368 & 9.58 & 0.024 & 0.023\\
	50 & 1.40 & 0.607 & 0.604 & 2.17 & 0.372 & 0.368 & 86.19 & 0.010 & 0.010\\
	250 & 6.89 & 0.600 & 0.606  & 10.71 & 0.367 & 0.368 & 532.1 &  0.007 & 0.007 \\
	\bottomrule
    \end{tabular}\label{Tab1}
\end{table}

The Conditioning Relation (\ref{condlaw}) provides another approach: the naive implementation takes $x = 1$ in (\ref{zjmean}), and simulates independent Poisson random variables $Z_2,\ldots,Z_n$ with $\bbbe Z_j = \theta/j$ and accepts $(Z_2,\ldots,Z_n)$ as an observation of the counts  $(\tC_2(n),\ldots,\tC_n(n))$ if $T_{1n} = 2 Z_2 + \cdots + n Z_n = n$. 
The acceptance probability is $\bbbp(T_{1n} = n)$; for large $n$, \cite[Theorem 4.13]{arratia_logarithmic_2003} shows that 
$ n \bbbp(T_{1n} = n) \sim e^{-\gamma \theta}/\Gamma(\theta)$,
where $\gamma$ is Euler's constant. We can do much better by adapting the argument in \cite[Section 5]{arratia_simulating_2018} by choosing $x = x(n)$ more carefully:  choose $c$ as the solution of the equation $\theta(1-e^{-c}) = c$, and set $ x = e^{-c/n}$. We then have
\begin{equation}\label{abtrate}
n \bbbp(T_{1n} = n) \sim e^{-\gamma \theta} e^{u(c)}/\Gamma(\theta),  \quad n \to \infty, 
\end{equation}
where $u(c) = -c + \theta \int_0^1 v^{-1} (1 - e^{-cv}) dv.$ The quantity $e^{u(c)}$ is the asymptotic factor by which the acceptance rate increases compared to the naive rate when $x = 1, c = 0$. When $\theta = 5$, this is 379.6, indicating a dramatic speed up over the naive version. Indicative results are shown in Table~\ref{Tab2}.

\begin{table}[htbp]
\caption{Conditioning Relation method. Derangements of size $n$, estimates based on 10,000 accepted runs. Values of $c$: -1.256 ($\theta = 0.5$), 0 ($\theta = 1$), 4.965 ($\theta  = 5$).}
\centering
\begin{tabular}{cccccccccc}
	\toprule
	\multicolumn{1}{c}{} & \multicolumn{3}{c}{$\theta = 0.5$} & \multicolumn{3}{c}{$\theta = 1.0$} &  \multicolumn{3}{c}{$\theta = 5.0$} \\
	 & time & accept & theory &  time & accept & theory &  time & accept & theory \\
	$n$ & (secs) & rate & (\ref{abtrate}) & (secs) & rate & (\ref{abtrate}) & (secs) & rate & (\ref{abtrate}) \\
	\cmidrule(l){2-4} \cmidrule(l){5-7} \cmidrule(l){8-10}
	10 & 2.46 & 0.059 & 0.061 & 2.59 & 0.055 & 0.056 & 4.41 & 0.032 & 0.088\\
	50 & 62.84 & 0.012 & 0.012 & 68.04 & 0.011 & 0.011 & 51.23 & 0.015 & 0.018\\
	250 & 1730 & 0.002 & 0.002 & 1884 & 0.002 & 0.002  & 1223 & 0.003 & 0.004\\
	\bottomrule
    \end{tabular}\label{Tab2}
\end{table}

Thus one of these methods is slow for large $n$, the other for large $\theta$. In contrast, the Markov chain approach provides a method that is acceptable for large $n$ and $\theta$. 

\subsection{Simulating derangements via the Markov chain}

It is straightforward to use the transition mechanism from Theorem 1 to generate a derangement from the spacings between the 1s in the Markovian sequence $1 \eta_n \eta_{n-1} \ldots \eta_2 1$. Indicative results are shown in Table~\ref{Tab3}. As anticipated, the run time of the Markov chain method is essentially constant as a function of $\theta$ for a fixed value of $n$, a property obviously not shared by the rejection methods. Comparing timings of these methods (which were implemented in {\sf R}) depends of course on the details of the code and the computer they are run on, so they should only be viewed as relative. It is interesting to note that the acceptance rate of the Conditioning Relation method is not monotone in $\theta$, because of the nature of the conditioning event. The first rejection method is sometimes faster than the Markov chain method, presumably because of the simpler coding required.

\begin{table}[htbp]
\caption{Markov chain method. Derangements of size $n$, estimates based on 10,000 accepted runs.}
\centering
\begin{tabular}{cccccccccc}
	\toprule
	\multicolumn{1}{c}{} & \multicolumn{3}{c}{Run time (secs)} \\
	\multicolumn{1}{c}{$n$} & \multicolumn{1}{c}{$\theta = 0.5$} & \multicolumn{1}{c}{$\theta = 1.0$} &  \multicolumn{1}{c}{$\theta = 5.0$} \\
		\cmidrule(l){2-4} \cmidrule(l){5-7} \cmidrule(l){8-10}
	10 &  0.686 & 0.654 & 0.642 \\
	50 & 4.665 & 4.636 & 4.752 \\
	250 & 32.98 & 32.91 & 33.57 \\
	\bottomrule
    \end{tabular}\label{Tab3}
\end{table}

\subsection{Examples}\label{examples}
We give four examples of the use of the simulation, for one of which the analytical answer is known, and for three of which it is not.\footnote{{\sf R} code for performing the computations described in the paper may be obtained from ST.}

\subsubsection{The probability of a single cycle}\label{singlecycle}

Since 
$$
\bbbp(\tC_n(n) = 1) = \bbbp(C_n(n) = 1 \mid C_1(n) = 0) = \bbbp(C_n(n) = 1) / \bbbp(C_1(n) = 0),
$$ 
we obtain
\begin{equation}\label{prob1cycle}
\bbbp(\tC_n(n) = 1) = \frac{n!}{\theta_{(n)}} \, \frac{\theta}{n} \, \frac{1}{\lambda_n(\theta)}, 
\end{equation}
which also follows from (\ref{ecjn*}) because $\bbbp(\tC_n(n) = 1) =  \bbbe \tC_n(n)$.

The asymptotics of (\ref{prob1cycle}) follow readily, using the fact that 
$n^{-\alpha} \Gamma(n+\alpha)/\Gamma(n) \to~1$ as  $n \to \infty$ to obtain
\begin{equation}\label{limprob1cycle}
\bbbp(\tC_n(n) = 1) \sim \Gamma(\theta+1) \left( \frac{e}{n}\right)^\theta, \quad n \to \infty.
\end{equation}
In Table~\ref{Tab8} some representative simulated and exact values are shown.

\begin{table}[htbp]
\caption{Probability that a derangement has a single cycle. Simulations use the Markov chain method, and estimates are based on 100,000 runs.}
\centering
\begin{tabular}{cccccccccc}
	\toprule
	\multicolumn{1}{c}{} & \multicolumn{3}{c}{$\theta = 0.5$} & \multicolumn{3}{c}{$\theta = 1.0$} &  \multicolumn{3}{c}{$\theta = 5.0$} \\
	 & sim & exact & asymp & sim  & exact  & asymp & sim  & exact & asymp \\
	$n$ &  & (\ref{prob1cycle})  & (\ref{limprob1cycle}) &  & (\ref{prob1cycle}) & (\ref{limprob1cycle}) &  &(\ref{prob1cycle}) & (\ref{limprob1cycle}) \\
	\cmidrule(l){2-4} \cmidrule(l){5-7} \cmidrule(l){8-10}
	10 & 0.476 & 0.480 & 0.462 & 0.270 & 0.272 & 0.272 & 0.021 & 0.021 & 0.178\\
	50 & 0.211 & 0.208 & 0.207 & 0.054 & 0.054 & 0.054 & 5$\times10^{-5}$ & 3.29$\times 10^{-5}$ & 5.70$\times 10^{-5}$\\
	250 & 0.092 & 0.093 & 0.092 & 0.011 & 0.011 & 0.011 & 0.0 & 1.62$\times 10^{-8}$ & 1.82$\times 10^{-8}$\\
	\bottomrule	
    \end{tabular}\label{Tab4}
\end{table}

\subsubsection{The probability that all cycle lengths are distinct}

This is a variant of the problem discussed in~\cite{arratia_simulating_2018}, for which there is no easy analytical answer.  The difference between the number of cycles and the number of distinct cycle lengths is 
$$
\tilde{D}_n = \sum_{j=2}^n (\tC_j(n) - 1)_+,
$$ 
where $(x)_+ = \max(0,x)$. We want $\bbbp(\tilde{D}_n = 0)$, which can be estimated by simulation. 

In \cite[eq. (10)]{arratia_simulating_2018} it is shown that for a permutation having the ESF($\theta$) distribution, the asymptotic probability that it has no repeated cycle lengths is $e^{-\gamma \theta}/\Gamma(\theta~+1).$ A modification of that argument shows that for derangements,
$$
\tilde{D}_n \Rightarrow \tilde{D} = \sum_{j \geq 2} (Z_j - 1)_+,
$$
where the $Z_j$ are the familiar independent Poisson random variables with $\bbbe Z_j = \theta/j$, so that
\begin{eqnarray}
\bbbp(\tilde{D}_n = 0) & \to & \bbbp(Z_j \leq 1, j \geq 2) 
 =  \prod_{j \geq 2} e^{-\theta/j}(1+\theta/j) \nonumber \\
& = & \frac{1}{e^{-\theta} (1+\theta)} \, \frac{e^{- \gamma \theta}}{\Gamma(\theta+1)} 
 =  \frac{e^{-\theta(\gamma - 1)}}{\Gamma(\theta+2)} \label{Dlim0}
\end{eqnarray}
Some representative values are given in Table~\ref{Tab5}.

 \begin{table}[htbp]
\caption{$\bbbp(\tilde{D}_n = 0)$, the probability that a derangement has distinct cycle lengths. Simulations using the Markov chain method, and estimates are based on 100,000 runs. The last row comes from (\ref{Dlim0}).}
\centering
\begin{tabular}{cccc}
	\toprule
	$n$ & $\theta = 0.5$ & $\theta = 1.0$ &  $\theta = 5.0$ \\
	\cmidrule(l){2-2} \cmidrule(l){3-3} \cmidrule(l){4-4} 
	10 & 0.885 & 0.774 & 0.357\\
	50 & 0.920 & 0.776 & 0.091\\
	250 & 0.927 & 0.765 & 0.028 \\
	$\infty$ & 0.929 & 0.763 & 0.012\\
	\bottomrule
    \end{tabular}\label{Tab5}
\end{table}

\subsubsection{The probability that all cycle lengths are even, or odd}
The third example is a little more subtle. We want to calculate
$$
\alpha_n = \bbbp\Big(\sum_{j \textrm{ even}} \tC_j(n) = 0\Big),\quad \beta_n = \bbbp\Big(\sum_{j \textrm{ odd}} \tC_j(n) = 0\Big),
$$
the probability that a derangement has all odd or all even cycle lengths, respectively. Clearly, if $n$ is odd then $\beta_n = 0$. We estimated $\alpha_n$ and $\beta_n$ by simulation, and some representative values are given in Table~\ref{Tab6}.

 \begin{table}[htbp]
\caption{The probability $\alpha_n$ that a derangement of length $n$ has all odd cycle lengths, and the probability $\beta_n$ that it has all even cycle lengths. Simulations using the Markov chain method, and estimates are based on 100,000 runs.}
\centering
\begin{tabular}{ccccccc}
	\toprule
	\multicolumn{1}{c}{} & \multicolumn{2}{c}{$\theta = 0.5$} & \multicolumn{2}{c}{$\theta = 1.0$} &  \multicolumn{2}{c}{$\theta = 5.0$} \\
	\cmidrule(l){2-3} \cmidrule(l){4-5} \cmidrule(l){6-7} 
	$n$ & $\alpha_n$ & $\beta_n$ & $\alpha_n$ & $\beta_n$ & $\alpha_n$ & $\beta_n$\\
	10 & 0.162 & 0.777 & 0.185 & 0.666 & 0.071 & 0.496 \\
	11 & 0.469 &  & 0.278 &  & 0.062 &  \\
	50 & 0.173 & 0.513 & 0.105 & 0.306 & 0.004 & 0.033\\
	51 & 0.261 &  & 0.114 &  & 0.004 & \\
	250 & 0.133 & 0.342 & 0.049 & 0.138 & 9$\times 10^{-5}$ & 8.9$\times 10^{-4}$\\
	251 & 0.160 &  & 0.051 &  & 6$\times 10^{-5}$ & \\
	\bottomrule
    \end{tabular} \label{Tab6}
\end{table}

\subsubsection{The ordered cycle lengths}\label{orderedex}

In Section~\ref{sectordered} we discussed briefly the size of the first cycle generated by the Markov chain. In Table~\ref{Tab8}, we compare  some properties of the longest cycle length for different values of $n$ and $\theta$, and in Table~\ref{Tab9} give some monotonicity properties of the ordered lengths. In the Appendix we provide proofs for some conjectures motivated by the simulation results.

 \begin{table}[htbp]
\caption{Probability $o_n$ that the largest cycle length is the first, the mean length $\bbbe A_1(n)$ of the first cycle, and the mean length $\bbbe L_1(n)$ of the longest cycle. Simulations use the Markov chain method, and estimates are based on 100,000 runs.}
\centering
\begin{tabular}{cccccccccc}
	\toprule
	\multicolumn{1}{c}{} & \multicolumn{3}{c}{$\theta = 0.5$} & \multicolumn{3}{c}{$\theta = 1.0$} &  \multicolumn{3}{c}{$\theta = 5.0$} \\
	$n$ & $o_n$ & $\bbbe A_1(n)$  & $\bbbe L_1(n)$ & $o_n$ & $\bbbe A_1(n)$  & $\bbbe L_1(n)$& $o_n$ & $\bbbe A_1(n)$   &  $\bbbe L_1(n)$ \\
	\cmidrule(l){2-4} \cmidrule(l){5-7} \cmidrule(l){8-10}
	10 & 0.847 & 7.64 & 8.16 & 0.766 & 6.45 & 7.17 & 0.604 & 3.91 & 4.79 \\
	50 &  0.775 & 34.33 & 38.43 & 0.652 & 26.51 & 32.15 &0.356  & 10.78 & 16.99 \\
	250 & 0.761 & 167.8  & 190.2  & 0.630 & 126.70 & 157.00 & 0.311 & 44.27 & 76.58 \\
	\bottomrule
\end{tabular}\label{Tab8}
\end{table}

 \begin{table}[htbp]
\caption{Probability that a derangement has weakly decreasing ordered cycle lengths ($\searrow$) or weakly increasing ordered cycle lengths ($\nearrow$). Simulations use the Markov chain method, and estimates are based on 100,000 runs.}
\centering
\begin{tabular}{ccccccc}
	\toprule
	\multicolumn{1}{c}{} & \multicolumn{2}{c}{$\theta = 0.5$} & \multicolumn{2}{c}{$\theta = 1.0$} &  \multicolumn{2}{c}{$\theta = 5.0$} \\
	$n$ & $\searrow$ & $\nearrow$  & $\searrow$ &$\nearrow$   & $\searrow$  & $\nearrow$  \\
	\cmidrule(l){2-3} \cmidrule(l){4-5} \cmidrule(l){6-7}
	10 & 0.833 & 0.646 & 0.730 & 0.475 & 0.486 & 0.216 \\
	50 & 0.666 & 0.287 & 0.433 & 0.101 & 0.023 & 3.5$\times 10^{-4}$\\
	250 & 0.561 & 0.130 & 0.257 & 0.020 & 4.2$\times 10^{-4}$ & 0.000\\
	\bottomrule
    \end{tabular} \label{Tab9}
\end{table}

\section{Discussion}
Our work was motivated by understanding how the Feller Coupling might be adapted to simulate derangements under the Ewens Sampling Formula with arbitrary parameter $\theta$. The result is a $\{0,1\}$-valued non-homogeneous Markov chain $\eta_{n+1} = 1, \eta_n,\eta_{n-1},\ldots,\eta_1 = 1$ for which the spacings between the 1s in $1\eta_n\eta_{n-1}\cdots\eta_1$ produce the ordered cycle sizes of  a $\theta$-biased derangement of length $n$. For the uniform case, the method described in~\cite{Mikawa2017} may also be described as a Markov chain (although it was not in that paper), and its transition matrix reduces to that in (\ref{prdef}) when $\theta = 1$; it is interesting to note that its construction differs dramatically from ours. The general method is compared to two other rejection-based methods, and shown to behave well.

We have focused here on the behavior of counts of cycle lengths, but the sequence $\eta_n,\eta_{n-1},\ldots,\eta_1 = 1$ may be used to generate the ordered permutation itself by a simple auxiliary randomisation~\cite[Chapter 5]{arratia_logarithmic_2003}: The first cycle starts with the integer 1, integers being chosen uniformly from the available unused integers at each $\eta_i = 0$, and closing the growing cycle and starting a new cycle with the smallest available integer at each $\eta_i = 1$. Probabilities associated with particular ordered cycle sizes may be computed from the Markov chain, and asymptotic properties of these lengths also follow directly.

Our chain does not generate derangements of size $n+1$ from one of size $n$, a property satisfied by the Feller Coupling (see the discussion after (\ref{feller})). Rather, the chain produces a derangement for a given value of $n$, and needs to be re-run to generate one of size $n+1$. 

Finally, we note that the Markov chain approach may be adapted to deal with other restricted patterns in the cycle lengths, such as requiring all cycles to have length at least $l$, or no cycles of length two or four.  


\bibliography{amsrefs1}
\bibliographystyle{cpclike}

\newpage
\appendix

\section{Monotonicity proofs}

This appendix provides analytical proofs for some of what has been observed in the simulation studies described in Section~\ref{examples}. More precisely, we prove $\alpha_n < \beta_n$ for $n=2b$, and show that the probability of having $A_1(n)\geq A_2(n)\geq \ldots \geq A_{\tilde K_n}(n)$ is strictly greater than the probability of $A_1(n)\leq A_2(n) \leq \ldots \leq A_{\tilde K_n}(n)$ for $n\geq 5$. 

For $i=4,\ldots,n-1$, the shift operator, denoted by $S_i^{(n)}:\Delta_n \longrightarrow \Delta_n$, is defined as follows. For any $r=(r_n,r_{n-1},\ldots,r_1)\in \Delta_n$ with $r_i=1$ and $r_{i-2}=0$, $$S_i^{(n)}(r)=S_i(r)=(r_n,\ldots,r_{i+1},0,1,0,r_{i-3},\ldots,r_1).$$ Let $S_i^{(n)}=S_i(r)=r$ for any other $r\in \Delta_n$. In other words, $S_i$ shifts $1$ at position $i$ of $r$ to position $i-1$, provided to not have a $1$ at position $i-2$. Let $Y^{(n)}=(Y^{(n)}_{n+1}=1,Y^{(n)}_{n},Y^{(n)}_{n-1},\ldots,Y^{(n)}_{1})$ be a $\{0,1\}-$valued Markov chain with transition probability matrix 
$$
\left(
\begin{array}{cc}
\bbbp(Y^{(n)}_{i} = 0 \mid Y^{(n)}_{i+1} = 0) & \bbbp(Y^{(n)}_{i} = 1 \mid Y^{(n)}_{i+1} = 0) \\
\bbbp(Y^{(n)}_{i}= 0 \mid Y^{(n)}_{i+1}= 1) & \bbbp(Y^{(n)}_{i} = 1 \mid Y^{(n)}_{i+1} = 1)
\end{array}
\right)
= \left(
\begin{array}{cc}
\displaystyle p_i&  \displaystyle q_i\\
1 & 0
\end{array}
\right),\\\\
$$ 
where $q_i=1-p_i$, for $i=3,..,n$, and $Y^{(n)}_{2}=0, Y^{(n)}_{1}=1$.

\begin{lemma}\label{LemmaA}
For $4\leq i\leq n-1$, let $r=(r_n,\ldots,r_1)\in \Delta_n$ with $r_i=1$ and $r_{i-2}=0$. Then $\bbbp(Y=S_i(r))>\bbbp(Y=r)$ if and only if $p_iq_{i-1}/p_{i-2}q_{i} > 1$.
\end{lemma}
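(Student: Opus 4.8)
The plan is to turn the statement into a one-line comparison of two path probabilities by exploiting the Markov property together with the locality of the shift. Because $r$ and $S_i(r)$ agree in every coordinate except $i$ and $i-1$, writing each of $\bbbp(Y=r)$ and $\bbbp(Y=S_i(r))$ as a product of one-step transition probabilities will make almost all factors cancel in the ratio $\bbbp(Y=S_i(r))/\bbbp(Y=r)$, leaving only the few transitions that touch the altered window.

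First I would pin down the local pattern. Since $r\in\Delta_n$ has no two adjacent $1$s and $r_i=1$, both neighbours $r_{i+1}$ and $r_{i-1}$ are forced to equal $0$; combined with the hypothesis $r_{i-2}=0$ this fixes $(r_{i+1},r_i,r_{i-1},r_{i-2})=(0,1,0,0)$, which $S_i$ replaces by $(0,0,1,0)$. In particular the incoming state $Y_{i+1}=0$ and the coordinate $Y_{i-2}=0$ are common to both sequences, and nothing changes outside positions $i-1$ and $i$.

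Next I would write $\bbbp(Y=r)=\prod_{j=1}^{n}\bbbp(Y_j=r_j\mid Y_{j+1}=r_{j+1})$ and the analogous product for $S_i(r)$, and isolate the factors that can differ. Only a transition whose source or target lies at position $i$ or $i-1$ can change, so exactly the three factors indexed by $j=i,\,i-1,\,i-2$ are affected. Reading the entries $p_j=\bbbp(Y_j=0\mid Y_{j+1}=0)$, $q_j=\bbbp(Y_j=1\mid Y_{j+1}=0)$ and $\bbbp(Y_j=0\mid Y_{j+1}=1)=1$ off the transition matrix, these three factors contribute $q_i\cdot 1\cdot p_{i-2}$ for $r$ and $p_i\cdot q_{i-1}\cdot 1$ for $S_i(r)$. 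All remaining factors coincide, so the ratio collapses to $p_i q_{i-1}/(q_i p_{i-2})$ and the claimed equivalence is immediate.

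The argument is routine once the window is fixed, so the only delicate point is the bookkeeping at the two ends of the range $4\le i\le n-1$. I must check that $Y_{i+1}$ is a genuine chain coordinate equal to $0$ rather than the boundary value $1$ (which holds because $i\le n-1$), and that the degenerate transition at position $2$ is absorbed consistently (via the convention $p_2=1$ coming from $P_2$) in the boundary case $i=4$. I do not expect any genuine obstacle beyond verifying that precisely these three factors change and that the boundary conventions do not disturb the cancellation.
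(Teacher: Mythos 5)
Your proof is correct and is essentially the argument the paper intends: Lemma~\ref{LemmaA} is stated without an explicit proof, but the first line of the paper's proof of Lemma~\ref{LemmaB} is exactly your cancellation, reducing $\bbbp(Y=S_i(r))/\bbbp(Y=r)$ to the three transition factors at positions $i$, $i-1$, $i-2$ and yielding $p_iq_{i-1}/(q_ip_{i-2})$. Your bookkeeping at the boundaries (that $r_{i+1}=0$ even when $i=n-1$, and the convention $p_2=1$ when $i=4$) is a detail the paper glosses over, and you handle it correctly.
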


From now on, we suppose for the chain $Y$, we have $p_iq_{i-1}/p_{i-2}q_{i} >1 $ for any $4\leq i \leq n-1$. Then it is not hard to see that for $n\geq 2k$, conditioned on having exactly $k$ cycles (i.e. $k+1$ $1$'s in $Y^{(n)}$), the most likely outcome of $Y^{(n)}$ is $10^{n-2k+1}1(01)^{k-1}$ and the one with the minimum chance of happening is $1(01)^{k-1}0^{n-2k+1}1$.

For $r=(r_n,r_{n-1},\ldots,r_1)\in\Delta_n$, denote by $|r|$ the number of $1$s in $r$ or the number of indices $i$ for which $r_i=1$, and let $\sigma_1(r)$ be the biggest $i<n+1$ for which $r_i=1$. Then by induction, for $j=2,\ldots,|r|$, let $\sigma_j(r)$ be the biggest index $i<\sigma_{j-1}(r)$ such that $r_i=1$. For convenience let $\sigma_0(r)=n+1$ and note that $\sigma_{|r|}(r)=1$ always.

\begin{lemma} \label{LemmaB}
Let $4\leq i\leq n-1$ and $r=(r_n,r_{n-1},\ldots,r_1)\in\Delta_n$ such that $r_i=1$ and $r_{i-2}=0$. Then
$$\bbbp(\eta=S_i(r))>\bbbp(\eta=r).$$
\end{lemma}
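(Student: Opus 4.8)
The plan is to recognize that $\eta$ is exactly the chain $Y^{(n)}$ with the transition probabilities read off from (\ref{prdef}), namely $p_r = (\theta+r-1)\lambda_r(\theta)/[(\theta+r-1)\lambda_r(\theta)+\theta\lambda_{r-1}(\theta)]$ and $q_r = 1-p_r = \theta\lambda_{r-1}(\theta)/[(\theta+r-1)\lambda_r(\theta)+\theta\lambda_{r-1}(\theta)]$. Lemma~\ref{LemmaA} then does all of the combinatorial work: it reduces the desired strict inequality $\bbbp(\eta=S_i(r))>\bbbp(\eta=r)$ to verifying the single scalar inequality $p_iq_{i-1}/(p_{i-2}q_i)>1$ for these explicit $p_r,q_r$. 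So the whole argument collapses to a computation about ratios of the normalizing constants $\lambda_r(\theta)$.

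First I would record a two-term recurrence for $\lambda_r(\theta)$. The cleanest route is probabilistic: since $\lambda_j(\theta)=\bbbp((\xi_j,\ldots,\xi_1)\in\Delta_j)$ with the independent Feller variables $\xi_i$ of (\ref{xidist}), I condition on $\xi_{j-1}$. If $\xi_{j-1}=0$ the remaining coordinates must lie in $\Delta_{j-1}$; if $\xi_{j-1}=1$ then $\xi_{j-2}=0$ is forced (no two consecutive $1$s), and the remaining coordinates must lie in $\Delta_{j-2}$. Using independence together with (\ref{xidist}) this yields
\[
\lambda_j(\theta)=\frac{j-1}{\theta+j-1}\,\lambda_{j-1}(\theta)+\frac{(j-1)\theta}{(\theta+j-1)(\theta+j-2)}\,\lambda_{j-2}(\theta),
\]
which for $\theta=1$ reduces to the classical $D_j=(j-1)(D_{j-1}+D_{j-2})$.

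The decisive simplification is to pass to the ratio $t_r:=q_r/p_r=\theta\lambda_{r-1}(\theta)/[(\theta+r-1)\lambda_r(\theta)]$. Dividing the recurrence by $\lambda_{r-1}(\theta)$ and substituting $\lambda_{r-2}(\theta)/\lambda_{r-1}(\theta)=(\theta+r-2)t_{r-1}/\theta$ collapses everything to the strikingly simple relation $t_r(1+t_{r-1})=\theta/(r-1)$, valid for $r\geq 3$ (with $t_2=0$). Since $p_r=1/(1+t_r)$ and $q_r=t_r/(1+t_r)$, the target ratio rewrites as $p_iq_{i-1}/(p_{i-2}q_i)=t_{i-1}(1+t_{i-2})/[t_i(1+t_{i-1})]$, and the relation evaluates the numerator to $\theta/(i-2)$ and the denominator to $\theta/(i-1)$. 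Hence the ratio equals $(i-1)/(i-2)>1$ for every $i\geq 4$, and Lemma~\ref{LemmaA} finishes the proof.

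I expect the only genuine obstacle to be establishing the clean relation $t_r(1+t_{r-1})=\theta/(r-1)$; once it is in hand the inequality is immediate. Manipulating the explicit sum (\ref{thetahats}) directly would be far messier, so I would commit to the probabilistic derivation of the $\lambda_r$ recurrence and the $t_r$ substitution, taking care only with the small-index base cases (in particular $\lambda_1(\theta)=0$ and $t_2=0$) to confirm that the relation holds throughout the full range $4\leq i\leq n-1$.
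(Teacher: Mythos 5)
Your proposal is correct and follows essentially the same route as the paper: the ratio $\p(\eta=S_i(r))/\p(\eta=r)$ is reduced to the four affected transition probabilities (the criterion $p_iq_{i-1}/p_{i-2}q_i>1$ of Lemma~\ref{LemmaA}), and the key input is the same two-term recurrence $\lambda_i(\theta)=\frac{i-1}{\theta+i-1}\bigl(\lambda_{i-1}(\theta)+\frac{\theta}{\theta+i-2}\lambda_{i-2}(\theta)\bigr)$, which collapses the ratio to $(i-1)/(i-2)$. Your substitution $t_r=q_r/p_r$ with the identity $t_r(1+t_{r-1})=\theta/(r-1)$ is only a cosmetic repackaging of the paper's computation, though you do add value by actually deriving the recurrence from the Feller variables $\xi$ (the paper asserts it without proof, and its subsequent remark notes the even shorter route $\p(\xi=S_i(r))/\p(\xi=r)=\frac{i-1}{i-2}$ via the conditioning relation).
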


\begin{proof}

\begin{equation}\label{pshift}
\begin{split}
\frac{\p(\eta=S_i(r))}{\p(\eta=r)}&=\frac{\p(\eta_i=0 \mid \eta_{i+1}=0)\p(\eta_{i-1}=1\mid \eta_i=0)}{\p(\eta_{i-2}=0 \mid \eta_{i-1}=0)\p(\eta_{i}=1\mid \eta_{i+1}=0)}\\
&= \frac{\frac{1}{\theta+i-2}.\frac{\lambda_i(\theta)}{\lambda_{i-1}(\theta)+\frac{\theta}{\theta+i-2}\lambda_{i-2}(\theta)}}{\frac{1}{\theta+i-1}.\frac{\lambda_{i-1}(\theta)}{\lambda_{i-2}(\theta)+\frac{\theta}{\theta+i-3}\lambda_{i-3}(\theta)}}\\
&=\frac{\frac{1}{\theta+i-2}.\frac{i-1}{\theta+i-1}}{\frac{1}{\theta+i-1}.\frac{i-2}{\theta+i-2}}\\
&=\frac{i-1}{i-2}>1.
\end{split}
\end{equation}
since $\lambda_i(\theta)=\frac{i-1}{\theta+i-1}\left(\lambda_{i-1}(\theta)+\frac{\theta}{\theta+i-2}\lambda_{i-2}(\theta)\right)$, for $i\geq 3$.
\end{proof}

\begin{remark}
Note that we could easily use the conditional relation between $\eta$ and $\xi$ to prove Lemma~\ref{LemmaB}. Namely, for $r\in \Delta_n$ with $r_i=1$, $r_{i-2}=0$, the left hand side of Equality (\ref{pshift}) is equal to
\[
\frac{\p(\xi=S_i(r))}{\p(\xi=r)}=\frac{\p(\xi_i=0)\p(\xi_{i-1}=1)}{\p(\xi_i=1)\p(\xi_{i-1}=0)}=\frac{i-1}{i-2}.
\]
\end{remark}

Using multiple shifting or directly from the conditional relation of $\xi$ and $\eta$, it is trivial to conclude the following proposition.
\begin{proposition}\label{proportionA}
Let $r,r'\in \Delta_n$ with $|r|=|r'|=b$. Then

\[\frac{\p(\eta=r)}{\p(\eta=r')}=\frac{\p(\xi=r)}{\p(\xi=r')}=\prod\limits_{j=1}^{b-1}\frac{\sigma_j(r')-1}{\sigma_j(r)-1}
\]
\end{proposition}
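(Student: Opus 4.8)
The plan is to establish the two equalities separately, each reducing to an elementary manipulation. The first equality is immediate from the construction of the chain $\eta$: by definition, for every $s \in \Delta_n$ one has $\p(\eta = s) = \lambda_n^{-1}(\theta)\,\p(\xi = s)$, and the normalizing constant $\lambda_n^{-1}(\theta)$ is independent of $s$. Since both $r$ and $r'$ lie in $\Delta_n$, forming the ratio $\p(\eta=r)/\p(\eta=r')$ cancels this common factor and leaves $\p(\xi=r)/\p(\xi=r')$.

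For the second equality I would use the independence of the $\xi_i$ to write $\p(\xi = s) = \prod_{i=1}^{n}\p(\xi_i = s_i)$, with $\p(\xi_i=1) = \theta/(\theta+i-1)$ and $\p(\xi_i=0) = (i-1)/(\theta+i-1)$ from (\ref{xidist}). Collecting the common denominator $\theta_{(n)} = \prod_{i=1}^{n}(\theta+i-1)$, which does not depend on $s$, gives $\p(\xi=s) = \theta_{(n)}^{-1}\,\theta^{|s|}\prod_{i:\,s_i=0}(i-1)$. Because $|r| = |r'| = b$, both the factor $\theta^{b}$ and the denominator $\theta_{(n)}$ cancel in the ratio, so that $\p(\xi=r)/\p(\xi=r') = \prod_{i:\,r_i=0}(i-1)\,/\prod_{i:\,r_i'=0}(i-1)$.

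The remaining step is a bookkeeping identity that rewrites the product over $0$-positions in terms of the ranks $\sigma_j$ of the $1$-positions. I would note first that every element of $\Delta_n$ has $s_1 = 1$, so $i=1$ is never a $0$-position and the vanishing factor $(i-1)=0$ never appears. Splitting $(n-1)! = \prod_{i=2}^{n}(i-1)$ according to whether $s_i = 0$ or $s_i = 1$, and recalling that the $1$-positions with $i \geq 2$ are exactly $\sigma_1(r) > \cdots > \sigma_{b-1}(r)$ (since $\sigma_b(r)=1$), yields $\prod_{i:\,r_i=0}(i-1) = (n-1)!\,/\prod_{j=1}^{b-1}(\sigma_j(r)-1)$. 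Substituting this expression for both $r$ and $r'$ cancels the factor $(n-1)!$ and produces $\prod_{j=1}^{b-1}(\sigma_j(r')-1)/(\sigma_j(r)-1)$, as claimed.

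There is no genuine obstacle here; the only point requiring care is the final indexing — confirming that $i=1$ always carries a $1$, so the factorial splitting involves no zero factor, and that precisely $b-1$ of the $1$-positions exceed $1$. As an alternative I could argue through the shifting operator behind Lemma~\ref{LemmaB}: a sequence of single-position shifts transforms $r'$ into $r$ through configurations in $\Delta_n$ all of weight $b$, each shift contributing an elementary ratio, and the resulting telescoping product reproduces the formula. I expect the direct computation above to be the cleaner route.
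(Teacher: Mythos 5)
Your proof is correct and follows the route the paper itself indicates (the paper simply asserts the result as ``trivial\ldots directly from the conditional relation of $\xi$ and $\eta$''): the first equality by cancelling the normalizing constant $\lambda_n^{-1}(\theta)$, and the second by writing $\p(\xi=s)=\theta_{(n)}^{-1}\theta^{|s|}\prod_{i:\,s_i=0}(i-1)$ and converting the product over $0$-positions into one over the $\sigma_j$. Your careful handling of the position $i=1$ (always a $1$, so no vanishing factor) and of the count of $1$-positions exceeding $1$ supplies exactly the bookkeeping the paper leaves implicit.
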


Note that the proportion in the statement of Proposition \ref{proportionA} does not depend on $\theta$.

We say a sequence $r=(r_n,\ldots,r_1)\in \Delta_n$ is even if for any $i=1,\ldots,|r|$, $\sigma_{i-1}(r)-\sigma_i(r)$ is even. Similarly, $r$ is odd if for any $i=1,\ldots,|r|$, $\sigma_{i-1}(r)-\sigma_i(r)$ is odd. In other words, the length of all cycles in an even (odd, respectively) sequence is even (odd, respectively). We denote by $\mathcal{E}_n$ and $\mathcal{O}_n$ the set of all even elements and odd elements of $\Delta_n$, respectively. Also, let 
\[\mathcal{E}'_n=\{r\in \mathcal{E}_n: \exists i=0,\ldots,\lfloor (|r|-1)/2\rfloor \textrm{ s.t. } \sigma_{2i}(r)-\sigma_{2i+1}(r)=2
\}.\]
This means that for a sequence $r\in \mathcal{E}'_n$, the $2i+1$-st spacing between $1$s (i.e. $2i+1$-st cycle) in $r$, for at least one $i$ in $\{1,\ldots,\lfloor (|r|-1)/2\rfloor\}$ has length $2$. Let $n=2b$, and for $k\leq b$ denote 
\begin{eqnarray*}
\mathcal{E}_{n,k} & = & \{r\in\mathcal{E}_n: |r|=k\} \\
\mathcal{E}'_{n,k} & = & \{r\in\mathcal{E}'_n: |r|=k\} \\
\mathcal{O}_{n,k} & = & \{r\in\mathcal{O}_n: |r|=k\}
\end{eqnarray*}
For $l\leq \lfloor b/2\rfloor$, the shift mapping provides a one-one correspondence between $\mathcal{E}_{n,2l}\setminus \mathcal{E}'_{n,2l}$ and $\mathcal{O}_{n,2l}$. More precisely, by shifting $1$s at positions $\sigma_{2i+1}(r)$ of $r\in \mathcal{O}_{n,2l}$, for $i=0,1,\ldots,l-1$, to their right we get an element $r'\in\mathcal{E}_{n,2l}\setminus \mathcal{E}'_{n,2l}$, and by shifting $1$s at position $\sigma_{2i+1}(r')$ of $r'\in\mathcal{E}_{n,2l}\setminus \mathcal{E}'_{n,2l}$, for $i=0,1,2,\ldots,l-1$, to their left we get $r$. In other words, 
\[r'=S_{\sigma_{2l-1}(r)}\circ \cdots \circ S_{\sigma_{3}(r)}\circ S_{\sigma_{1}(r)}(r).\] For example $100100100100001 \in \mathcal{O}_{14,4}$ is mapped into $100010100010001 \in \mathcal{E}_{14,4}\setminus \mathcal{E}'_{14,4}$. Lemma \ref{LemmaA} implies $\p(Y^{(n)}=r')>\p(Y^{(n)}=r)$. On the other hand, $\mathcal{O}_{n,2l+1}$ is empty for $l\in\bbbz_+$. This leads us to $\p(Y^{(n)}\in \mathcal{E}_n)>\p(Y^{(n)}\in \mathcal{O}_n)$, for $n=2b$.

In particular, $\beta_n=\p(\eta\in\mathcal{E}_n)>\alpha_n=\p(\eta\in\mathcal{O}_n)$. In fact, we can say more about the relation of $\beta_n$ and $\alpha_n$. 

\begin{theorem}
Let $n=2b$. Then $\p(Y^{(n)}\in\mathcal{E}_n)>\p(Y^{(n)}\in\mathcal{O}_n)$. Furthermore, letting $u_n$ be the number of $1$s in $Y^{(n)}_n,\ldots,Y^{(n)}_1$, 
\begin{equation*}
\p(Y^{(n)}\in\mathcal{E}_n)- \p(Y^{(n)}\in\mathcal{E}'_n, u_n \ is \ even)-\p(Y^{(n)}\in\mathcal{E}_n, u_n \ is \ odd)\\
=\sum\limits_{r\in \mathcal{O}_n}\varphi(r) \p(Y^{(n)}=r),
\end{equation*} 
where $$\varphi(r)=\prod\limits_{i=0}^{\lfloor\frac{|r|}{2}\rfloor-1}\frac{p_{\sigma_{2i+1}(r)}q_{\sigma_{2i+1}(r)-1}}{q_{\sigma_{2i+1}(r)}p_{\sigma_{2i+1}(r)-2}}>1.$$ 
\end{theorem}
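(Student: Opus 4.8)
The plan is to deduce the identity from the shift bijection constructed just before the statement, feeding in the exact single-shift weight ratio recorded in Lemma~\ref{LemmaA}. Throughout, recall that $u_n=|r|$ equals the number of cycles, so $\{u_n\text{ even}\}$ is the event of having exactly $2l$ ones for some $l$.

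First I would collapse the left-hand side. Splitting $\p(Y^{(n)}\in\mathcal{E}_n)$ according to the parity of $u_n$ and using $\mathcal{E}'_n\subseteq\mathcal{E}_n$, the two subtracted terms remove the odd-$u_n$ part and the $\mathcal{E}'_n$ part, leaving
$$
\p(Y^{(n)}\in\mathcal{E}_n)-\p(Y^{(n)}\in\mathcal{E}'_n,\,u_n\text{ even})-\p(Y^{(n)}\in\mathcal{E}_n,\,u_n\text{ odd})
=\p\big(Y^{(n)}\in(\mathcal{E}_n\setminus\mathcal{E}'_n),\,u_n\text{ even}\big).
$$
Since $u_n$ even means exactly $2l$ ones, this equals $\sum_{l}\p(Y^{(n)}\in\mathcal{E}_{n,2l}\setminus\mathcal{E}'_{n,2l})$.

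Next I would push each summand through the bijection $\Phi=S_{\sigma_{2l-1}}\circ\cdots\circ S_{\sigma_3}\circ S_{\sigma_1}$ from $\mathcal{O}_{n,2l}$ onto $\mathcal{E}_{n,2l}\setminus\mathcal{E}'_{n,2l}$, the key being to track the weight. The $l$ shifts act on the disjoint coordinate blocks $\{\sigma_{2i+1}(r),\sigma_{2i+1}(r)-1,\sigma_{2i+1}(r)-2\}$ — disjoint because consecutive cycles of an odd sequence have odd lengths $\ge 3$, forcing $\sigma_{2i+1}(r)-\sigma_{2i+3}(r)\ge 6$ — so they commute, and every intermediate sequence stays in $\Delta_n$ with a $1$ at $\sigma_{2i+1}(r)$ and a $0$ at $\sigma_{2i+1}(r)-2$. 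Moreover all shift positions lie in the admissible range $4\le\sigma_{2i+1}(r)\le n-1$ (the top cycle forces $\sigma_1\le n-1$ and the bottom odd cycle of length $\ge 3$ forces $\sigma_{2l-1}\ge 4$). Hence Lemma~\ref{LemmaA} applies at each step with the \emph{original} position $\sigma_{2i+1}(r)$, and the exact ratio it compares to $1$, namely $\p(Y=S_i(r))/\p(Y=r)=p_iq_{i-1}/(p_{i-2}q_i)$, multiplies across the $l$ shifts to give $\p(Y^{(n)}=\Phi(r))=\varphi(r)\,\p(Y^{(n)}=r)$. Summing over $r\in\mathcal{O}_{n,2l}$ and over $l$, and using $\mathcal{O}_{n,2l+1}=\emptyset$ so that $\mathcal{O}_n=\bigsqcup_l\mathcal{O}_{n,2l}$, converts $\sum_l\p(Y^{(n)}\in\mathcal{E}_{n,2l}\setminus\mathcal{E}'_{n,2l})$ into $\sum_{r\in\mathcal{O}_n}\varphi(r)\,\p(Y^{(n)}=r)$, which is the asserted identity.

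Finally, the strict inequality comes for free: each factor of $\varphi(r)$ exceeds $1$ under the standing hypothesis $p_iq_{i-1}/(p_{i-2}q_i)>1$, so $\varphi(r)>1$ and the right-hand side strictly dominates $\sum_{r\in\mathcal{O}_n}\p(Y^{(n)}=r)=\p(Y^{(n)}\in\mathcal{O}_n)$; since the two subtracted terms on the left are nonnegative, $\p(Y^{(n)}\in\mathcal{E}_n)$ is at least the left-hand side, whence $\p(Y^{(n)}\in\mathcal{E}_n)>\p(Y^{(n)}\in\mathcal{O}_n)$. I expect the only genuine obstacle to be the middle step: verifying cleanly that the sequential shifts commute, remain admissible, and that the product of single-shift ratios telescopes to $\varphi(r)$ evaluated at the original coordinates rather than at the shifted ones. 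The left-hand collapse and the closing inequality are then routine.
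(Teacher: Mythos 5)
Your argument is correct and follows essentially the same route as the paper's own proof: collapse the left-hand side to $\p\bigl(Y^{(n)}\in\mathcal{E}_{n,2l}\setminus\mathcal{E}'_{n,2l}\bigr)$ summed over $l$, push each piece through the composed-shift bijection from $\mathcal{O}_{n,2l}$, multiply the single-shift ratios of Lemma~\ref{LemmaA} to get $\varphi(r)$, and use $\mathcal{O}_{n,2l+1}=\emptyset$. The only difference is that you spell out the admissibility and commutativity of the shifts (via the disjointness of the affected coordinate blocks), which the paper leaves implicit in ``the discussion above.''
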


\begin{proof}
From the discussion above and Lemma~\ref{LemmaA} for $l\leq \lfloor b/2\rfloor$, $r\in\mathcal{O}_{n,2l}$, $r'=S_{\sigma_{2l-1}(r)}\circ \cdots\circ S_{\sigma_{3}(r)}\circ S_{\sigma_{1}(r)}(r)$,
\[\varphi(r)=\frac{\p(Y^{(n)}=r')}{\p(Y^{(n)}=r)}=\prod\limits_{i=0}^{\lfloor\frac{|r|}{2}\rfloor-1}\frac{p_{\sigma_{2i+1}(r)}q_{\sigma_{2i+1}(r)-1}}{q_{\sigma_{2i+1}(r)}p_{\sigma_{2i+1}(r)-2}}>1.\]
Therefore, \[\p(Y^{(n)}\in\mathcal{E}_{n,2l}\setminus \mathcal{E}_{n,2l}')=\sum\limits_{r\in\mathcal{O}_{n,2l}}\varphi(r)\p(Y^{(n)}=r).\]
Noting that $\mathcal{O}_{n,2l+1}$ is empty for $l\in\bbbz_+$, and summing two sides of the above equation over $l\leq \lfloor b/2 \rfloor$ finishes the proof.
\end{proof}

The following is an immediate consequence of the last theorem.
\begin{theorem}
Let $n=2b$. Then $\alpha_n<\beta_n$. Furthermore,
\begin{equation*}
\beta_n - \p(\eta\in\mathcal{E}'_n, \tilde{K}_n\ even)-\p(\eta\in\mathcal{E}_n, \tilde{K}_n \ odd)\\
=\sum\limits_{r\in \mathcal{O}_n}\left( \prod\limits_{i=0}^{\lfloor\frac{|r|}{2}\rfloor-1}\frac{\sigma_{2i+1}(r)-1}{\sigma_{2i+1}(r)-2} \right) \p(\eta=r).
\end{equation*}
 \end{theorem}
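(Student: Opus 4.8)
The plan is to recognise the derangement chain $\eta$ as the special case of the generic chain $Y^{(n)}$ obtained by taking for $p_i,q_i$ the transition probabilities of Theorem~1, and then to read the assertion off the preceding theorem once $\varphi$ has been evaluated explicitly. First I would record, for the parameters of the $\eta$-chain, the identity
\[
\frac{p_i q_{i-1}}{p_{i-2}\,q_i}=\frac{i-1}{i-2},\qquad 4\le i\le n-1,
\]
which is immediate on comparing Lemma~\ref{LemmaA} (giving $\p(Y^{(n)}=S_i(r))/\p(Y^{(n)}=r)=p_iq_{i-1}/(p_{i-2}q_i)$) with the computation~\eqref{pshift} in the proof of Lemma~\ref{LemmaB} (which evaluates the same ratio for $\eta$ as $(i-1)/(i-2)$). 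Since $(i-1)/(i-2)>1$ for every $i\ge 4$, the chain $\eta$ satisfies the standing hypothesis $p_iq_{i-1}/(p_{i-2}q_i)>1$ under which the preceding theorem was established, so that theorem applies verbatim with $Y^{(n)}=\eta$.

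Next I would identify the quantities in the preceding theorem with their derangement analogues: by definition $\p(\eta\in\mathcal{O}_n)=\alpha_n$ and $\p(\eta\in\mathcal{E}_n)=\beta_n$, while the number $u_n$ of $1$s in $\eta_n,\ldots,\eta_1$ is exactly the cycle count $\tilde{K}_n$. Substituting the displayed identity factor-by-factor into the definition of $\varphi$ collapses it to
\[
\varphi(r)=\prod_{i=0}^{\lfloor|r|/2\rfloor-1}\frac{p_{\sigma_{2i+1}(r)}\,q_{\sigma_{2i+1}(r)-1}}{q_{\sigma_{2i+1}(r)}\,p_{\sigma_{2i+1}(r)-2}}=\prod_{i=0}^{\lfloor|r|/2\rfloor-1}\frac{\sigma_{2i+1}(r)-1}{\sigma_{2i+1}(r)-2},
\]
which is precisely the product in the statement. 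Feeding these substitutions into the displayed equation of the preceding theorem then yields the ``furthermore'' identity, while the strict inequality $\alpha_n<\beta_n$ is nothing but the specialisation of $\p(Y^{(n)}\in\mathcal{E}_n)>\p(Y^{(n)}\in\mathcal{O}_n)$.

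The one point needing genuine care, and the step I would treat as the main obstacle, is to confirm that every index $\sigma_{2i+1}(r)$ arising in these products lies in the admissible range $4\le i\le n-1$ of the shift operator, so that Lemmas~\ref{LemmaA}--\ref{LemmaB} legitimately apply and each factor $(\sigma_{2i+1}(r)-1)/(\sigma_{2i+1}(r)-2)$ is both well defined and strictly greater than $1$. For $r\in\mathcal{O}_n$ this follows because a derangement has no fixed points, so every cycle---in particular every odd-length cycle---has length at least $3$; consequently the rightmost shifted $1$ sits at position $\sigma_{|r|-1}(r)\ge 4$ and all denominators satisfy $\sigma_{2i+1}(r)-2\ge 2>0$, while the leftmost such position obeys $\sigma_1(r)\le n-1$ since $r_n=0$ for $r\in\Delta_n$. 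Granting this, $\varphi(r)>1$ for each $r$, and nothing beyond the bookkeeping already carried out in the preceding theorem remains.
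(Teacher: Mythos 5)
Your proposal is correct and follows exactly the route the paper intends: the paper presents this theorem as an immediate consequence of the preceding one, obtained by specialising the generic chain $Y^{(n)}$ to $\eta$ and evaluating each factor of $\varphi$ as $(\sigma_{2i+1}(r)-1)/(\sigma_{2i+1}(r)-2)$ via the ratio computed in the proof of Lemma~2. Your additional check that every shifted index lies in the admissible range $[4,n-1]$ (because odd cycles in a derangement have length at least $3$ and $r_n=0$) is a detail the paper leaves implicit, but it is correct and does not change the argument.
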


A sequence $r\in\Delta_n$ is weakly increasing (weakly decreasing, respectively) if for any $l=1,..,|r|-1$, $\sigma_{l-1}(r)-\sigma_{l}(r)\leq \sigma_{l}(r)-\sigma_{l+1}(r)$ ($\sigma_{l-1}(r)-\sigma_{l}(r)\geq \sigma_{l}(r)-\sigma_{l+1}(r)$, respectively). The set of all increasing (decreasing, respectively) sequences in $\Delta_n$ is denoted by $\Lambda_1(n)$ ($\Lambda_2(n)$, respectively). Note that $\Lambda_1(n)=\Lambda_2(n)=\Delta_n$ for $n=2,3,4$. The simulation results in Table~\ref{Tab9} suggest that $\p(\eta\in\Lambda_1(n))$ is significantly smaller than $\p(\eta\in\Lambda_2(n))$. To prove this, let us define the inversion operator $r\in\Delta \mapsto \overleftarrow{r}$ by reversing $r$, that is let $\overleftarrow{r}_i=r_{n+2-i}$, for $i=1,\ldots,n+1$. It is clear that the inversion operator induces a bijection on $\Delta_n$, and $r\in\Delta_n$ is weakly increasing if only if $\overleftarrow{r}$ is weakly decreasing,  that means the inversion operator also induces a bijection from $\Lambda_1(n)$ to $\Lambda_2(n)$. From Lemmas~\ref{LemmaA} and \ref{LemmaB}, for $n\geq 5$ and $r\in\Lambda_1(n)$,
\[
\p(Y^{(n)}=\overleftarrow{r})> \p(Y^{(n)}=r) 
\] 
and in particular,
\[
\p(\eta=\overleftarrow{r})> \p(\eta=r). 
\] 
Moreover, Proposition~\ref{proportionA} implies
\[
\frac{\p(\eta=\overleftarrow{r})}{\p(\eta=r)} = \prod_{i=1}^{|r|-1}\frac{\sigma_i(r)-1}{\sigma_i(\overleftarrow{r})-1} =
\prod_{i=1}^{|r|-1}\frac{\sigma_i(r)-1}{n+1-\sigma_i(r)}.
\]
\begin{theorem}
For $n\geq 5$, we have
\[
\p(Y^{(n)}\in\Lambda_2(n))>\p(Y^{(n)}\in\Lambda_1(n)).
\] 
In particular,
\[
\p(\eta\in\Lambda_2(n))>\p(\eta\in\Lambda_1(n))
\] 
and
\[
\p(\eta\in\Lambda_2(n))=\sum\limits_{r\in\Lambda_1(n)}\left( \prod\limits_{i=1}^{|r|-1}\frac{\sigma_i(r)-1}{n+1-\sigma_i(r)}\right) \p(\eta=r).
\]
\end{theorem}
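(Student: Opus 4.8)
The plan is to assemble the three facts established just above the statement --- that the inversion $r\mapsto\overleftarrow{r}$ is a bijection of $\Lambda_1(n)$ onto $\Lambda_2(n)$, the pointwise comparison coming from Lemmas~\ref{LemmaA} and~\ref{LemmaB}, and the exact probability ratio from Proposition~\ref{proportionA} --- into a single termwise argument. First I would use the bijection to reindex the event of weakly decreasing cycle lengths, writing
\[
\p(Y^{(n)}\in\Lambda_2(n))=\sum_{s\in\Lambda_2(n)}\p(Y^{(n)}=s)=\sum_{r\in\Lambda_1(n)}\p(Y^{(n)}=\overleftarrow{r}).
\]
Comparing this sum term by term against $\sum_{r\in\Lambda_1(n)}\p(Y^{(n)}=r)=\p(Y^{(n)}\in\Lambda_1(n))$ and invoking $\p(Y^{(n)}=\overleftarrow{r})\geq\p(Y^{(n)}=r)$ for each $r\in\Lambda_1(n)$ then gives the first inequality. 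The same reindexing applied to the chain $\eta$, using the corresponding pointwise comparison for $\eta$, yields $\p(\eta\in\Lambda_2(n))\geq\p(\eta\in\Lambda_1(n))$.

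For the explicit identity I would keep the reindexed sum and factor each summand as $\p(\eta=\overleftarrow{r})=\bigl(\p(\eta=\overleftarrow{r})/\p(\eta=r)\bigr)\,\p(\eta=r)$, then substitute the ratio already computed from Proposition~\ref{proportionA},
\[
\frac{\p(\eta=\overleftarrow{r})}{\p(\eta=r)}=\prod_{i=1}^{|r|-1}\frac{\sigma_i(r)-1}{n+1-\sigma_i(r)},
\]
which reproduces the displayed formula immediately.

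The one point needing care is promoting the termwise $\geq$ to a strict inequality for the summed quantities, and this is where the hypothesis $n\geq5$ enters. The ratio above equals $1$ exactly when $r=\overleftarrow{r}$, i.e.\ when $r$ is palindromic --- equivalently when all its cycle lengths agree --- so a purely termwise comparison only yields $\geq$. To obtain strictness I would exhibit a single non-palindromic member of $\Lambda_1(n)$ whose contribution is strictly larger on the $\Lambda_2$ side: for every $n\geq5$ the weakly increasing sequence with cycle lengths $2,n-2$ lies in $\Lambda_1(n)$ and satisfies $2\neq n-2$, so its image under inversion is distinct and strictly more probable by Lemmas~\ref{LemmaA} and~\ref{LemmaB}. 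This forces strict inequality of the sums, for both $Y^{(n)}$ and $\eta$. The obstruction at $n=4$ is transparent here: the only admissible cycle-length multisets are $\{4\}$ and $\{2,2\}$, both palindromic, so every term is an equality and $\p(\eta\in\Lambda_2(n))=\p(\eta\in\Lambda_1(n))$. I expect no further analytic difficulty, since all the quantitative content is already supplied by the lemmas and the proposition; the remaining work is only this equality-case bookkeeping together with the careful use of the inversion bijection.
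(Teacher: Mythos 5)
Your proof is correct and follows essentially the same route as the paper: the inversion bijection from $\Lambda_1(n)$ onto $\Lambda_2(n)$, a termwise comparison of $\p(Y^{(n)}=\overleftarrow{r})$ with $\p(Y^{(n)}=r)$ via Lemmas~\ref{LemmaA} and~\ref{LemmaB}, and Proposition~\ref{proportionA} to produce the explicit ratio $\prod_{i=1}^{|r|-1}\frac{\sigma_i(r)-1}{n+1-\sigma_i(r)}$. You are in fact slightly more careful than the paper, whose preceding display asserts the strict pointwise inequality $\p(Y^{(n)}=\overleftarrow{r})>\p(Y^{(n)}=r)$ for \emph{every} $r\in\Lambda_1(n)$ --- which fails for palindromic $r$ such as the single $n$-cycle --- whereas you correctly weaken the termwise comparison to $\geq$ and recover strictness of the summed inequality by exhibiting the non-palindromic witness with cycle lengths $(2,n-2)$ for $n\geq 5$.
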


\end{document}